\documentclass{amsart}
\usepackage[utf8]{inputenc}
\usepackage{hyperref}
\usepackage{amsfonts}
\usepackage{amssymb}
\usepackage{amsthm}
\usepackage{bbold}
\usepackage{tikz}
\usetikzlibrary{matrix,arrows,decorations.pathmorphing, positioning, automata}

\setcounter{secnumdepth}{0}

\DeclareMathOperator{\T}{\mathsf T}

\DeclareMathOperator{\F}{\mathsf F}

\DeclareMathOperator{\e}{\varepsilon}
\DeclareMathOperator{\Db}{\mathsf D^{\mathsf b}}
\DeclareMathOperator{\D1}{\mathsf D_\text{$1$}}
\DeclareMathOperator{\Dn}{\mathsf D_\text{$n$}}

\DeclareMathOperator{\K1}{\mathsf K_\text{$1$}}

\DeclareMathOperator{\Kn}{\mathsf K_\text{$n$}}
\DeclareMathOperator{\Cb}{\mathsf C^{\mathsf b}}
\DeclareMathOperator{\C1}{\mathsf C_\text{$1$}}
\DeclareMathOperator{\Cn}{\mathsf C_\text{$n$}}

\DeclareMathOperator{\Hom}{\mathsf{Hom}}

\DeclareMathOperator{\End}{\mathsf{End}}

\DeclareMathOperator{\modules}{\mathsf{mod}}

\DeclareMathOperator{\proj}{\mathsf{proj}}

\DeclareMathOperator{\rad}{\mathsf{rad}}

\makeatletter
\def\amsbb{\use@mathgroup \M@U \symAMSb}
\makeatother

\newtheorem{theorem}[equation]{Theorem}
\newtheorem{lemma}[equation]{Lemma}

\newtheorem{proposition}[equation]{Proposition}

\newtheorem{criterion}[equation]{Criterion}
\newtheorem*{theorem*}{Theorem}
\newtheorem*{lemma*}{Lemma}
\newtheorem*{corollary*}{Corollary}
\newtheorem*{proposition*}{Proposition}
\newtheorem*{observation*}{Observation}
\newtheoremstyle{named}{}{}{\itshape}{}{\bf}{.}{.5em}{\thmnote{#3's }#1}
\theoremstyle{named}

\theoremstyle{remark}
\newtheorem*{remark}{Remark}
\theoremstyle{definition}
\newtheorem*{example}{Example}

\title[Quadratic monomial algebras]{Differential modules over \\ quadratic monomial algebras}
\author{Torkil Stai}
\address{Institutt for matematiske fag, NTNU, 7491 Trondheim, Norway}
\email{torkil.stai@math.ntnu.no}

\begin{document}

\begin{abstract}
   We compare the so-called clock condition to the gradability of certain differential modules over quadratic monomial algebras. For a stably hereditary algebra or a gentle one-cycle algebra, these considerations show that the orbit category of its bounded derived category with respect to a positive power of the shift functor is triangulated if and only if the algebra is piecewise hereditary.
\end{abstract}

\maketitle

\section{Introduction} \label{section:intro}
Let $\T$ be a category with an automorphism $\F$. The \textit{orbit category} $\T\!/\F$ has the objects of $\T$ and morphism spaces given by
\[
\T\!/\F(X,Y)= \bigoplus_{i \in \amsbb Z} \T(X, \F^i Y)
\]
with the natural composition. Suppose now that $\T$ is a triangulated category and that $\F$ is exact. Does $\T\!/\F$ inherit a triangulated structure so that the canonical projection $\T \to \T\!/\F$ becomes exact?

In such vast generality it is not clear how to even look for an answer. As a partial remedy, Keller showed in the seminal \cite{MR2184464} that certain orbit categories of derived categories of algebras admit an embedding into a triangulated hull with a universal property. This allows us to rephrase the above question in these cases as `does the orbit category coincide with its triangulated hull?' and Keller moreover proved that whenever the algebra is piecewise hereditary, the answer is affirmative. Intriguingly, there are no known counter-examples to the converse of the last result, and in \cite{MR3024264} it was conjectured that (a $\tau_2$-finite algebra) $\Lambda$ must be piecewise hereditary in order for the category $\Db(\modules \Lambda)/\amsbb S \circ \Sigma^{-2}$ to be triangulated. This problem remains open, indicating that the business of triangulated hulls is a delicate one.

Our humble strategy is to contribute by attacking a baby case, in the following sense. Powers of the shift functor itself are certainly comprehensible automorphisms of $\Db(\modules \Lambda)$, and we naively hope that this makes it feasible to understand when $\Db(\modules \Lambda)/\Sigma^n$ is triangulated for a positive integer $n$. Our first result shows that this problem is invariant under $n$.

\begin{theorem*}[See Theorem \ref{thm:homogeneity}]
   If the orbit category $\Db(\modules \Lambda)/\Sigma^n$ is triangulated for one choice of $n$, then it is triangulated for each $n$.
\end{theorem*}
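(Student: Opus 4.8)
The plan is to pass to Keller's dg model for the triangulated hull and to recast the word `triangulated' as a gradability statement that can be transported between different powers of $\Sigma$ along a finite cyclic covering; write $P(n)$ for the assertion that $\Db(\modules\Lambda)/\Sigma^n$ is triangulated. First I would fix a dg algebra $\Gamma_n=\Lambda\otimes_k k[u^{\pm1}]$ over the ground field $k$, with $u$ of cohomological degree $-n$, whose perfect derived category $\per\Gamma_n$ is, by \cite{MR2184464}, the triangulated hull of $\Db(\modules\Lambda)/\Sigma^n$; concretely a perfect $\Gamma_n$-module is an $n$-periodic complex of $\Lambda$-modules. The canonical projection is realised by $\pi_n=\Gamma_n\otimes_\Lambda-\colon\Db(\modules\Lambda)\to\per\Gamma_n$, which is fully faithful onto its image. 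Hence $P(n)$ holds exactly when $\pi_n$ is dense up to summands, i.e. when every object of $\per\Gamma_n$ is \emph{gradable}: a direct summand of some $\pi_n X$ with $X$ a bounded complex. The theorem thus reduces to showing that the property `every object of $\per\Gamma_n$ is gradable' is independent of $n$.

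For $n\mid n'$, say $n'=nm$, there is a homomorphism $\Gamma_{nm}\to\Gamma_n$ sending the degree $-nm$ generator $u_{nm}$ to $u_n^{m}$, exhibiting $\Gamma_n$ as a free $\Gamma_{nm}$-module of rank $m$ on $1,u_n,\dots,u_n^{m-1}$ — a cyclic covering with Galois group $\amsbb Z/m$. I would record three compatibilities for the adjoint pair $\Ind=\Gamma_n\otimes_{\Gamma_{nm}}-$ and $\Res$: both are exact and preserve perfect objects; on projections $\Ind\circ\pi_{nm}\cong\pi_n$; and, as a $\Gamma_{nm}$-bimodule, $\Gamma_n\cong\bigoplus_{k=0}^{m-1}\Gamma_{nm}[kn]$, whence $\Res\,\pi_n X\cong\bigoplus_{k=0}^{m-1}\pi_{nm}(X)[kn]$ and $\Res\,\Ind Z\cong\bigoplus_{k=0}^{m-1}Z[kn]$.

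These formulas deliver the easy implication, that gradability for a period forces it for all multiples, unconditionally. Assuming every object of $\per\Gamma_n$ is gradable, take $Z\in\per\Gamma_{nm}$; then $\Ind Z$ is gradable, hence a summand of some $\pi_n X$, so applying $\Res$ shows $\Res\,\Ind Z\cong\bigoplus_k Z[kn]$ is a summand of $\Res\,\pi_n X\cong\bigoplus_k\pi_{nm}(X)[kn]$, a sum of gradables. As the gradable objects are closed under summands and shifts, $Z$ is gradable, so $P(n)\Rightarrow P(nm)$. In particular, since every $n$ is a multiple of $1$, already $P(1)$ propagates to all $n$.

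The main obstacle is the reverse, descending from a multiple $nm$ back to $n$ (and ultimately to the differential modules at $n=1$). One is given $Y\in\per\Gamma_n$ with $\Res Y$ gradable and must grade $Y$ itself; equivalently one must realise $Y$ as a summand of $\Ind\Res Y$, that is, split the multiplication $\Gamma_n\otimes_{\Gamma_{nm}}\Gamma_n\to\Gamma_n$ of $\Gamma_n$-bimodules. This is exactly separability of the cyclic extension $\Gamma_{nm}\subseteq\Gamma_n$, available as soon as $m$ is invertible in $k$; the delicate point is that the finer period-$n$ datum on $Y$ (the map $u_n$, beyond $u_n^m=u_{nm}$) need not be recoverable from an $nm$-periodic grading alone, so it is really the $\amsbb Z/m$-equivariant descent that must be controlled. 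Granting the splitting, combining the unconditional upward step with this downward step through $L=\mathsf{lcm}(n,n')$, as $P(n)\Rightarrow P(L)\Rightarrow P(n')$, yields homogeneity; isolating a characteristic-free form of the descent is the step I expect to absorb most of the effort.
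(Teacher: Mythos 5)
Your setup and your ``upward'' implication $P(n)\Rightarrow P(nm)$ are essentially the paper's argument in dg clothing: the paper works directly with the periodic derived categories $\Dn(\modules\Lambda)$, defines the compression functor $\Delta_{n,1}$ (your $\Ind$) and the inflation $\iota$ (your $\Res$), and your isomorphism $\Res\pi_nX\cong\bigoplus_{k}\pi_{nm}(X)[kn]$ is exactly the paper's explicit block decomposition of $\iota\Delta_1(X)$ along subdiagonals. Two caveats even here: the paper only ever compares period $n$ with period $1$, so the lcm detour is unnecessary; and your appeal to ``gradable objects are closed under summands and shifts'' is not free --- the orbit category being triangulated means the embedding is \emph{dense}, not dense up to summands, and closing the gap between the two is precisely where the paper invokes Farnsteiner's results (\cite[Proposition 1.4, Corollary 1.3]{Farnsteiner}).

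The genuine gap is your descent step, and the paper does not resolve it by separability --- which, as you note, would impose $\characteristic\mathbb k\nmid m$, a restriction the theorem does not carry. Instead, for $P(n)\Rightarrow P(1)$ the paper takes $Y\in\D1(\modules\Lambda)$ \emph{indecomposable}, uses the hypothesis to write $\iota(Y)\cong\Delta_n(X)$ with $X=\bigoplus_iX_i$ a sum of indecomposable bounded complexes, applies the compression to get $Y$ as a direct summand of $\Delta_{n,1}\iota(Y)\cong\Delta_1(X)\cong\bigoplus_i\Delta_1(X_i)$, and then invokes \cite[Corollary 1.3]{Farnsteiner}: the compression $\Delta_1$ sends indecomposable objects of $\Db(\modules\Lambda)$ to indecomposable differential modules. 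Krull--Schmidt then forces $Y\cong\Delta_1(X_i)$ for some $i$, so $Y$ is honestly in the essential image --- no splitting of the counit $\Ind\Res Y\to Y$ as a bimodule map is needed, only that $Y$ occurs as \emph{some} summand of an object whose indecomposable summands are all known to be gradable. This preservation-of-indecomposability input is the idea your proposal is missing; without it (or something equivalent), the $\amsbb Z/m$-equivariant descent you flag really is obstructed in characteristic dividing the index, so your instinct that this is the hard point is sound, but the separability route is not the one that closes it.
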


In \cite{P1} the embedding of this orbit category into its triangulated hull was made explicit in the terms of $n$-periodic complexes. Moreover, evidence was collected to support the conjecture that also this orbit category is triangulated if and only if $\Lambda$ is piecewise hereditary. We will turn a fraction of this guesswork into the below theorem. Recall that a gentle one-cycle algebra is simply a gentle algebra whose ordinary quiver contains precisely one cycle. On the other hand, the class of stably hereditary algebras contains those that are stably equivalent to a hereditary one, and in particular the radical square zero algebras by \cite[X.2]{MR1476671}. Hence, although a modest contribution, our result might be seen as a promising first step towards a full resolution. Indeed, recall that in \cite{MR2413349} the piecewise hereditary algebras were described as those algebras whose strong global dimension is finite. This beautiful characterization, however, was first achieved for radical square zero algebras in \cite{MR2041672}.

\begin{theorem*}[See Theorem \ref{thm:stablyhereditary} and Theorem \ref{thm:gentleonecycle}]
   Let $\Lambda$ be a stably hereditary or a gentle one-cycle algebra and let $n$ be a positive integer. The orbit category $\Db(\modules \Lambda)/\Sigma^n$ is triangulated if and only if $\Lambda$ is piecewise hereditary.
\end{theorem*}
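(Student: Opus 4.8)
The \emph{if} direction is essentially due to Keller: when $\Lambda$ is piecewise hereditary, $\Db(\modules \Lambda)$ is equivalent to the bounded derived category of a hereditary abelian category, and \cite{MR2184464} then guarantees that the orbit category coincides with its triangulated hull, hence is triangulated. So the content lies in the converse, which I would attack by contraposition: assuming $\Lambda$ is not piecewise hereditary, produce an obstruction to triangulation. By the homogeneity result (Theorem \ref{thm:homogeneity}) I am free to pick whichever $n$ is most convenient, and I would invoke the explicit model from \cite{P1} realising the triangulated hull of $\Db(\modules \Lambda)/\Sigma^n$ as a homotopy category of $n$-periodic complexes, with the orbit category sitting inside as the image of $\Db(\modules \Lambda)$. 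The reformulation I would aim for is that the orbit category is triangulated precisely when every $n$-periodic complex is \emph{gradable}, i.e.\ isomorphic to one induced from a genuine $\amsbb Z$-graded complex. A non-gradable periodic complex is then an object of the hull that does not descend to the orbit category, forcing the inclusion to be proper and the orbit category to fail to be triangulated.

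Once this translation is in place the problem becomes combinatorial, and here the hypotheses enter: both classes are (or reduce to) quadratic monomial algebras, for which $n$-periodic complexes — equivalently the relevant differential modules — admit an explicit description in terms of walks on the quiver. The plan is to match non-gradability against the failure of piecewise heredity on each class. For a gentle one-cycle algebra the decisive invariant is the clock condition on the unique cycle, which is known to be exactly the obstruction to being piecewise hereditary. I would construct a differential module supported on the cycle whose differential follows the arrows around it, and show that the degree bookkeeping required to grade it closes up consistently if and only if the clock condition fails; when the clock condition holds the resulting monodromy cannot be untwisted, yielding a non-gradable object and hence a non-triangulated orbit category.

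For the stably hereditary case the analogue of the clock condition is the shape of the separated quiver: by \cite{MR2041672} a radical square zero algebra is piecewise hereditary exactly when this quiver is a disjoint union of Dynkin graphs, the radical square zero incarnation of the finite strong global dimension criterion of \cite{MR2413349}. Using the stable equivalence to a hereditary algebra I would reduce the general stably hereditary situation to this combinatorial condition, and whenever it fails — so that the separated quiver contains a non-Dynkin, hence cyclic or otherwise infinite, component — I would feed that component into the same machine to build a non-gradable differential module, again breaking triangulation. Assembling the two cases with the \emph{if} direction gives the stated equivalence.

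The main obstacle, I expect, is the central equivalence of the first paragraph: pinning down precisely that triangulation of the orbit category is equivalent to gradability of every $n$-periodic complex, and then controlling this through the quadratic monomial combinatorics. The delicate points are, first, verifying that a candidate non-gradable differential module is genuinely non-gradable — that no change of basis makes its differential homogeneous — which requires a careful analysis of the indecomposables over the quadratic monomial algebra; and second, confirming that such an object really lies in the triangulated hull and witnesses the strict inclusion, rather than being trivialised by the passage to the homotopy category. Everything else I anticipate to be a matter of matching the combinatorial condition, clock condition or separated-quiver shape, to this degree obstruction.
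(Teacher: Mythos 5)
Your overall architecture matches the paper's: Keller's theorem for the \emph{if} direction, Theorem \ref{thm:homogeneity} to reduce to differential modules, the explicit hull from \cite{P1}, and a non-gradable relatively projective differential module as the witness that the inclusion into the hull is proper (the paper packages this as Criterion \ref{crit:nongradable}, including the point you flag about survival in the homotopy category, which is handled by closure of the image of the forgetful functor under summands). But there are two genuine problems. First, in the gentle one-cycle paragraph you have the clock condition exactly backwards. Going around a cycle, each arrow of the differential forces a degree increment, and the consistency of the degree bookkeeping is precisely the statement that the number of clockwise relations \emph{equals} the number of counter-clockwise ones; so the differential module is gradable if and only if the clock condition \emph{holds} (Lemma \ref{lem:radicalsquarezero} and Proposition \ref{prop:clock}), and by \cite{MR892057} a gentle one-cycle algebra is iterated tilted, hence piecewise hereditary, if and only if the clock condition holds. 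Your version --- ``when the clock condition holds the resulting monodromy cannot be untwisted, yielding a non-gradable object'' --- would manufacture an obstruction precisely over the piecewise hereditary algebras and contradict the \emph{if} direction you proved in your first paragraph. The correct chain is: not piecewise hereditary $\Rightarrow$ clock condition violated $\Rightarrow$ non-gradable differential module.

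Second, your reduction of the stably hereditary case ``using the stable equivalence to a hereditary algebra'' does not cover the whole class: stably hereditary algebras are defined by a condition on submodules of projectives and factors of injectives, and the paper explicitly notes (with the example $\End_{\mathbb k \amsbb A_3}(\mathbb k \amsbb A_3 \oplus S)$ from \cite{MR0404243}) that not all of them are stably equivalent to a hereditary algebra, so the separated-quiver/Dynkin criterion for radical square zero algebras from \cite{MR2041672} does not propagate to the general case by your route. The paper instead observes that every stably hereditary algebra is quadratic monomial \cite{MR532393} and quotes \cite[Theorem 3.2.5]{MR2387592}: a stably hereditary algebra is iterated tilted if and only if it satisfies the clock condition. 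With that input both cases run through the single Proposition \ref{prop:clock}, whose proof contracts each chord of the offending cycle to produce a radical square zero algebra $\Lambda'$ still violating the clock condition and then transports the generic differential module $(\Lambda',\e')$ back to $\Lambda$. If you fix the direction of the clock condition and replace the stable-equivalence reduction by these citations, your outline becomes the paper's proof.
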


We prove the missing implication by consulting the hands-on description of $\Db(\modules \Lambda)/\Sigma$ and its triangulated hull from \cite{P1}. This allows us to construct, over an arbitrary quadratic monomial algebra, an object in the triangulated hull which cannot belong to the orbit category, under the assumption that $\Lambda$ contravenes the so-called clock condition. The argument is then completed by results of \cite{MR892057,MR2387592} which reveal that if the $\Lambda$ in our theorem is not piecewise hereditary, then it does violate said condition.

\subsection{Overview, conventions and acknowledgements} This short note starts with a reminder on periodic complexes and the triangulated hull, which is used to prove the first above cited theorem. The clock condition is then discussed, and in particular reformulated for radical square zero algebras (Lemma \ref{lem:radicalsquarezero}), before it is employed in the construction of the pivotal object of the triangulated hull (Proposition \ref{prop:clock}). In the last section the stably hereditary and gentle (one-cycle) algebras are introduced, and the second above cited theorem is proved.

We denote by $\Lambda$ a basic finite-dimensional algebra of finite global dimension over an algebraically closed field $\mathbb k$. All $\Lambda$-modules are finitely generated right modules, and these form the category $\modules \Lambda$ with the subcategory $\proj \Lambda$ of projectives.

The author would like to thank Ragnar-Olaf Buchweitz for initially inspiring him to pursue the conjecture in question for radical square zero algebras. He is also most grateful to Steffen Oppermann for numerous helpful discussions.

\section{The triangulated hull} \label{section:thehull}
Let $n$ be a positive integer. The automorphism $\Sigma^n \cong - \otimes_{\Lambda} \Sigma^n \Lambda$ of $\Db(\modules \Lambda)$ satisfies the hypothesis in \cite{MR2184464} that ensure the existence of a triangulated hull of the orbit category $\Db(\modules \Lambda)/\Sigma^n$. By the above discussion, declaring that the latter is triangulated is the same as saying that it coincides with its triangulated hull.

\subsection{Periodic complexes} \label{subsec:periodiccomplexes} Denote by $\Cn(\modules \Lambda)$ the category of $n$-periodic complexes and chain maps over $\Lambda$. To this one can associate the $n$-periodic  homotopy (or stable) category $\Kn(\modules \Lambda)$ and derived category $\Dn(\modules \Lambda)$, obtained by formally inverting the class of quasi-isomorphisms. These are both triangulated, and projective resolutions provide a convenient triangle equivalence $\Dn(\modules \Lambda) \cong \Kn(\proj \Lambda)$.

There is a forgetful functor $\Cb(\modules \Lambda) \to \Cn(\modules \Lambda)$, and objects in its essential image are referred to as \textit{gradable}. Explicitly, forgetting means assigning to
\[
   0 \to X^0 \to X^1 \to \cdots \to X^{l-1} \to X^l \to 0
\]
the $n$-periodic
\[
   \cdots \to \mathop{\oplus}_{i \equiv 1} X^i  \to \mathop{\oplus}_{i \equiv 2}X^i \to \cdots \to \mathop{\oplus}_{i \equiv 0}X^i \to \mathop{\oplus}_{i \equiv 1}X^i \to \cdots
\]
where the differentials are obvious, and each congruence is taken modulo $n$. This functor is exact, and so the same procedure automatically gives a functor
\[
   \Delta_n \colon \Db(\modules \Lambda) \to \Dn(\modules \Lambda).
\]
Moreover, the latter induces a fully faithful functor $\Db(\modules \Lambda)/\Sigma^n \to \Dn(\modules \Lambda)$, and in \cite{P1} it is shown that this realizes the embedding of the orbit category into its triangulated hull.

\subsection{Homogeneity} \label{subsec:homogeneity} Morally, whether $\Db(\modules \Lambda)/\Sigma^n$ is triangulated or not, should be independent of $n$. Let us put this intuition on formal footing.

\begin{theorem} \label{thm:homogeneity}
   If the orbit category $\Db(\modules \Lambda)/\Sigma^n$ is triangulated for one choice of $n$, then it is triangulated for each $n$.
\end{theorem}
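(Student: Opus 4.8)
The plan is to recast the statement entirely inside the periodic derived categories. Since the embedding $\Db(\modules\Lambda)/\Sigma^n \to \Dn(\modules\Lambda)$ identifies the orbit category with the full subcategory of gradable objects, the orbit category is triangulated precisely when every object of its triangulated hull $\mathcal H_n \subseteq \Dn(\modules\Lambda)$ — the thick subcategory generated by the gradables — is itself gradable. It therefore suffices to compare gradability across different periods, and for this I would introduce, whenever $n \mid m$ with $m = kn$, two exact comparison functors on periodic complexes: the inflation $\iota \colon \mathsf C_n(\modules\Lambda) \to \mathsf C_m(\modules\Lambda)$ that reads an $n$-periodic complex as an $m$-periodic one, and the folding $\pi \colon \mathsf C_m(\modules\Lambda) \to \mathsf C_n(\modules\Lambda)$ sending $Z$ to the complex with $\bigoplus_{j \equiv i} Z^j$ in degree $i$. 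Both preserve quasi-isomorphisms, hence descend to triangle functors between $\Dn(\modules\Lambda)$ and $\mathsf D_m(\modules\Lambda)$.

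The first key step is to record how these interact with the gradable objects. A direct computation on components gives natural isomorphisms $\pi \circ \Delta_m \cong \Delta_n$ and $\iota \circ \Delta_n \cong \Delta_m \circ \bigoplus_{j=0}^{k-1}\Sigma^{-jn}$. Consequently $\pi$ and $\iota$ both send gradable objects to gradable objects, and being triangle functors they carry the triangulated hull into the triangulated hull, so that $\pi(\mathcal H_m) \subseteq \mathcal H_n$ and $\iota(\mathcal H_n) \subseteq \mathcal H_m$. I would also note that gradable objects are closed under direct summands, using that $\Db(\modules\Lambda)$ is idempotent complete.

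The heart of the argument is a splitting. For every $m$-periodic complex $Z$ the diagonal inclusion of $Z$ into $\iota\pi Z$, which in degree $i$ is the inclusion of the $j=i$ summand of $\bigoplus_{j\equiv i}Z^j$, is a chain map admitting the evident projection as a retraction; this exhibits $Z$ as a direct summand of $\iota\pi Z$ in $\mathsf D_m(\modules\Lambda)$, with no hypothesis on the characteristic. Granting this, the upward implication is immediate: if the orbit category is triangulated for $n$ and $Z \in \mathcal H_m$, then $\pi Z \in \mathcal H_n$ is gradable, hence so is $\iota\pi Z$, and the summand $Z$ is gradable as well. Thus $n$-triangulated implies $m$-triangulated for every multiple $m$ of $n$. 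For two arbitrary periods $n_0$ and $n$ one then passes through a common multiple, so the theorem reduces to the reverse, downward implication.

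The main obstacle I anticipate is precisely this downward step: deducing that $Y \in \mathcal H_n$ is gradable from the hypothesis that the orbit category is triangulated for some multiple $m = kn$. Inflating, $\iota Y \in \mathcal H_m$ is gradable by assumption, hence $\pi\iota Y$ is gradable, and everything comes down to realizing $Y$ as a direct summand of $\pi\iota Y$. On the diagonal one finds $\pi\iota Y \cong \bigoplus_{\zeta^k = 1}(Y, \zeta\, d_Y)$, and the summand corresponding to $\zeta = 1$ is $Y$ — a splitting effected by the averaging idempotent $\tfrac1k\sum$ whenever $k$ is invertible in $\mathbb k$. The delicate point is characteristic dividing $k$, where this averaging is unavailable; here I would argue that $Y$ is nonetheless a retract of $\pi\iota Y$ by appealing to the idempotent completeness of $\Dn(\modules\Lambda)$ together with a homotopy-idempotent refinement of the above splitting, or equivalently by descending the bounding complex of $\iota Y$ directly. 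Establishing this retract in full generality is, I expect, the crux of the proof; once it is in hand the downward implication closes exactly as the upward one did, and the two together yield the stated independence of $n$.
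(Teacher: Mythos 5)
Your overall framework---recasting the question as gradability in the periodic derived categories, the two comparison functors (inflation and folding), and the ``upward'' direction from a period to its multiples---coincides with the paper's proof, which runs the same argument with the two periods always specialized to $n$ and $1$. In particular your observation that $Z$ splits off $\iota\pi Z$ already at the chain level, via the degree-$j$ inclusion of $Z^j$ into $\bigoplus_{i\equiv j}Z^i$, is exactly how that half is handled there. The genuine gap is the downward direction, and you have located it correctly but not closed it. The complex $\pi\iota Y$ is $Y$ tensored over $\mathbb k$ with the regular representation of $\amsbb Z/k$, the differential twisted by the cyclic permutation matrix $C$; it decomposes according to the primary factorization of $x^k-1$, so if $\characteristic\mathbb k=p$ and $k=k'p^v$ with $p\nmid k'$, the summand that would have to contain $Y$ is $(Y^{\oplus p^v},\,d_Y\otimes J_{p^v}(1))$, a $p^v$-fold iterated self-extension of $Y$ governed by a single Jordan block. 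The remedies you gesture at do not apply: idempotent completeness splits idempotents that exist, whereas here the required idempotent need not exist. Concretely, if $M$ is a $\Lambda$-module with $\End_\Lambda(M)\cong\mathbb k[d]/(d^2)$ (say a quasi-length two regular module over the Kronecker algebra) and $Y=(M,d)$, then the chain endomorphism ring of $\pi\iota Y$ is the centralizer of $C$ in $M_k(\mathbb k)$ modulo a square-zero ideal, i.e.\ $\mathbb k[x]/(x^k-1)$ up to nilpotents, which has only $k/p^v$ primitive idempotents; so $Y$ is not a direct summand of $\pi\iota Y$ in $\C1(\modules\Lambda)$ when $p\mid k$. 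Establishing the retract in the derived category would require a different mechanism, which your proposal does not supply.

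For comparison, the paper's treatment of this direction is structured differently: it takes $Y$ indecomposable, uses the hypothesis to write $\Delta_{n,1}\iota(Y)\cong\bigoplus_i\Delta_1(X_i)$ with each $\Delta_1(X_i)$ indecomposable by a result of Farnsteiner, and concludes by Krull--Schmidt, thereby avoiding any appeal to closure of the gradable objects under summands. But it still asserts, without further comment, that $Y$ is a direct summand of $\Delta_{n,1}\iota(Y)$---the very retract you could not produce, in the special case $n=k$, $m=1$. So you have isolated precisely the step on which everything turns; as written, however, your argument proves the theorem only when $\characteristic\mathbb k$ does not divide $k$, and you should either produce the retract in $\Dn(\modules\Lambda)$ by an honest argument or restructure the downward implication so as not to need it.
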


\begin{proof}
   It suffices to show that $\Delta_1 \colon \Db(\modules \Lambda) \to \D1(\modules \Lambda)$ is dense if and only if $\Delta_n \colon \Db(\modules \Lambda) \to \Dn(\modules \Lambda)$ is dense. Given an $n$-periodic complex we construct a $1$-periodic one by taking the coproduct of $\Lambda$-modules over one period and equipping this with the obvious induced differential. There is an evident action on morphisms yielding a functor $\Delta_{n,1} \colon \Dn(\modules \Lambda) \to \D1(\modules \Lambda)$ which moreover satisfies $\Delta_{n,1} \Delta_n \cong \Delta_1$. On the other hand, viewing $1$-periodic complexes as $n$-periodic gives a functor $\iota \colon \D1(\modules \Lambda) \to \Dn(\modules \Lambda)$.
   \begin{center}
      \begin{tikzpicture}\matrix(a)[matrix of math nodes,
      row sep=3em, column sep=1em,
      text height=1.5ex, text depth=0.25ex]
      { & \Db(\modules \Lambda) &  \\
      \Dn(\modules \Lambda) && \D1(\modules \Lambda) \\ };
         \path[->,font=\scriptsize] (a-1-2) edge node[above left]{$\Delta_n$} (a-2-1)
         (a-1-2) edge node[above right]{$\Delta_1$} (a-2-3);
         \path[transform canvas={yshift=.3ex},->,font=\scriptsize] (a-2-1) edge node[above]{$\Delta_{n,1}$} (a-2-3);
         \path[transform canvas={yshift=-.3ex},->,font=\scriptsize](a-2-3) edge node[below]{$\iota$} (a-2-1);
      \end{tikzpicture}
   \end{center}
   Assume that $\Delta_n$ is dense, and let $Y \in \D1(\modules \Lambda)$ be indecomposable. Then there is some $X=X_1 \oplus \cdots \oplus X_m \in \Db(\modules \Lambda)$ with each $X_i$ indecomposable such that $\iota(Y) \cong \Delta_n(X)$, to which applying $\Delta_{n,1}$ gives
   \[
      \Delta_{n,1} \iota (Y) \cong \Delta_1(X) \cong \bigoplus_{i=1}^m \Delta_1(X_i).
   \]
   $Y$ is clearly a summand of the left hand side, and each $\Delta_1(X_i)$ is indecomposable by \cite[Corollary 1.3]{Farnsteiner}. As a decomposition into indecomposables is essentially unique, it follows that $Y$ belongs to the essential image of $\Delta_1$. For the converse, a similar argument will do. Namely, assume that $\Delta_1$ is dense, and pick $Y \in \Dn(\modules \Lambda)$. Then we find $X \in \Db(\modules \Lambda)$ such that $\Delta_{n,1}(Y) \cong \Delta_1(X)$, and hence $\iota \Delta_{n,1}(Y) \cong \iota \Delta_1(X)$ in $\Dn(\modules \Lambda)$. It is clear that $Y$ is a summand of $\iota \Delta_{n,1}(Y)$, so it suffices to show that the essential image of $\Delta_n$ contains $\iota \Delta_1(X)$. But the latter is the $n$-periodic
   \[
      \cdots \to \mathop{\oplus}_{i \in \amsbb Z} X^i \xrightarrow{\e} \mathop{\oplus}_{i \in \amsbb Z} X^i \xrightarrow{\e} \mathop{\oplus}_{i \in \amsbb Z} X^i \to \cdots
   \]
   where $\e$ can be written as a matrix whose only non-zero entries lie on the first sub-diagonal. This feature of $\e$ allows $\iota \Delta_1(X)$ to decompose in $\Dn(\modules \Lambda)$ as

   \begin{align*}
      &  \cdots \to \mathop{\oplus}_{i \equiv 0}X^i \xrightarrow{\bar{\e}} \mathop{\oplus}_{i \equiv 1}X^i \to \cdots \to \mathop{\oplus}_{i \equiv -1 }X^i \xrightarrow{\bar{\e}} \mathop{\oplus}_{i \equiv 0}X^i \to \cdots \\
      & \makebox[0pt][l]{
    \vphantom{$\bigoplus$}
    \ooalign{\phantom{$\cdots\cdots\cdots\cdots\cdots\cdots\cdots\cdots\cdots\cdots\cdots\cdots\cdots\cdots\cdots\cdots\cdots\cdots $}\cr\hss$\bigoplus$\hss}}     \\
    &  \cdots \to \mathop{\oplus}_{i \equiv 1 }X^i \xrightarrow{\bar{\e}} \mathop{\oplus}_{i \equiv 2 }X^i \to \cdots \to \mathop{\oplus}_{i \equiv 0 }X^i \xrightarrow{\bar{\e}} \mathop{\oplus}_{i \equiv 1}X^i \to \cdots \\
    & \makebox[0pt][l]{
    \vphantom{$\oplus$}
    \ooalign{\phantom{$\cdots\cdots\cdots\cdots\cdots\cdots\cdots\cdots\cdots\cdots\cdots\cdots\cdots\cdots\cdots\cdots\cdots\cdots$}\cr\hss$\bigoplus$\hss}}    \\
    & \makebox[0pt][l]{
    \vphantom{$\vdots$}
    \ooalign{\phantom{$\cdots\cdots\cdots\cdots\cdots\cdots\cdots\cdots\cdots\cdots\cdots\cdots\cdots\cdots\cdots\cdots\cdots\cdots$}\cr\hss$\vdots$\hss}}    \\
    & \makebox[0pt][l]{
    \vphantom{$\oplus$}
    \ooalign{\phantom{$\cdots\cdots\cdots\cdots\cdots\cdots\cdots\cdots\cdots\cdots\cdots\cdots\cdots\cdots\cdots\cdots\cdots\cdots$}\cr\hss$\bigoplus$\hss}}    \\
     &  \cdots \to \mathop{\oplus}_{i \equiv -1 }X^i \xrightarrow{\bar{\e}} \mathop{\oplus}_{i \equiv 0 }X^i \to \cdots \to \mathop{\oplus}_{i \equiv -2 }X^i \xrightarrow{\bar{\e}} \mathop{\oplus}_{i \equiv -1 }X^i \to \cdots
   \end{align*}
   where each $\bar{\e}$ denotes the obvious restriction of $\e$. These summands clearly all belong to the essential image of $\Delta_n$.
\end{proof}

\subsection{Differential modules} \label{subsec:differentialmodules}
Following \cite{MR2308849}, by a \textit{differential module} over $\Lambda$ we mean a module over the algebra $\Lambda[\e]= \Lambda[t]/(t^2)$ of dual numbers, that is a pair $(M, \e_M)$ with \textit{underlying module} $M \in \modules \Lambda$ and \textit{differential}  $\e_M \in \End_{\Lambda}(M)$ squaring to zero. If the underlying module belongs to $\proj \Lambda$ then such an object is called \textit{relatively projective}. Observe that a differential module is nothing but a $1$-periodic complex, and by Theorem \ref{thm:homogeneity} we may restrict our attention to the gradability of such objects. Indeed, if we want to show that the orbit category $\Db(\modules \Lambda)/\Sigma^n$ is strictly smaller than its triangulated hull then we need only point to an object in $\K1(\proj \Lambda)$ that is not gradable. What is more, it suffices to demonstrate this property in $\C1(\proj \Lambda)$, since passing to the homotopy category preserves non-gradability. To see why this is the case, take any $X \in \C1(\modules \Lambda)$ and assume it is gradable in $\K1(\modules \Lambda)$. Then there is some null-homotopic $Y$ such that $X \oplus Y$ lies in the essential image of the forgetful functor. It now follows that $X$ is gradable already in $\C1(\modules \Lambda)$, since said image is closed under direct summands (see for instance \cite[Proposition 1.4]{Farnsteiner}). Let us gather up the essence of the current section.

\begin{criterion} \label{crit:nongradable}
   If $\Lambda$ admits a relatively projective differential module which is non-gradable, then $\Db(\modules \Lambda)/\Sigma^n$ is not triangulated for any positive integer $n$.
\end{criterion}

\section{The clock condition} \label{section:clock}

Our algebra $\Lambda$ is of the form $\mathbb k Q/I$ for some quiver $Q$ and admissible ideal $I$. If $\Lambda$ is \textit{monomial}, meaning that $I$ is generated by paths, then it is said to satisfy the \textit{clock condition} if in each cycle of $Q$, the number of clockwise oriented relations equals the number of counter-clockwise oriented ones.

\subsection{Radical square zero algebras}
Suppose that $(\rad\Lambda)^2$ vanishes. In this case the clock condition says precisely that in each cycle of $Q$, the number of clockwise oriented arrows equals the number of counter-clockwise oriented ones. This notion has been used in both \cite{MR2497950} (under the alias `walk condition') and in \cite{BautistaLiu} (here referred to as $Q$ being `gradable') to describe the derived categories of radical square zero algebras. Before putting it to use in the context of differential modules, we offer the following interpretation.

As always, $\Lambda$ is trivially a graded algebra $\Lambda^0$ concentrated in degree zero. We can also view it as the graded algebra $\Lambda^\Pi$ by letting the paths be homogeneous elements of degree equal to their lengths. Two graded algebras are \textit{graded equivalent} if there is an equivalence between the respective categories of graded modules.

\begin{observation*}
   A radical square zero algebra $\Lambda$ satisfies the clock condition if and only if the graded algebras $\Lambda^\Pi$ and $\Lambda^0$ are graded equivalent.
\end{observation*}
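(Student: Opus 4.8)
The plan is to reformulate the clock condition cohomologically and then treat the two implications separately. A grading on $\Lambda$ under which the arrows are homogeneous is the same datum as a weight function $w\colon Q_1 \to \amsbb Z$; here $\Lambda^0$ corresponds to $w \equiv 0$ and $\Lambda^\Pi$ to $w \equiv 1$. For a radical square zero algebra, the reformulation of the clock condition recorded just above says precisely that there exists a function $\partial\colon Q_0 \to \amsbb Z$ with $\partial(j) - \partial(i) = 1$ for every arrow $a\colon i \to j$: traversing any cycle, such a $\partial$ forces the clockwise and counter-clockwise arrows to balance, and conversely the balancing lets one define $\partial$ consistently on each connected component of $Q$. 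In other words, the clock condition is exactly the assertion that the weight function $w \equiv 1$ is the coboundary of $\partial$.

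For the implication that the clock condition yields a graded equivalence, I would write down an explicit isomorphism of categories $\Gr \Lambda^\Pi \to \Gr \Lambda^0$ by re-grading vertexwise. Given $N = \bigoplus_m N_m$ in $\Gr \Lambda^\Pi$, decompose each homogeneous piece over the vertices as $N_m = \bigoplus_{i} N_m e_i$ and declare the summand $N_m e_i$ to now sit in degree $m - \partial(i)$. Because an arrow $a\colon i \to j$ raises the $\Lambda^\Pi$-degree by one while $\partial(j) = \partial(i)+1$, its action $N_m e_i \to N_{m+1}e_j$ preserves the new degree; thus the re-graded module is genuinely a graded $\Lambda^0$-module, the idempotents acting within each degree and the arrows now being homogeneous of degree zero. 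This assignment is the identity on underlying $\Lambda$-modules and on morphisms, with inverse the re-grading by $+\partial(i)$, so it is an isomorphism of categories and in particular a graded equivalence.

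For the converse I would argue the contrapositive through an equivalence invariant, namely the decomposition of the category into blocks together with the number of simple objects in each. The simple objects of either $\Gr \Lambda^0$ or $\Gr \Lambda^\Pi$ are the vertex simples placed in a single degree, say $S_{i,d}$ for $i \in Q_0$ and $d \in \amsbb Z$, and $\Ext^1$ between two of them is spanned by the arrows between the corresponding vertices that are compatible with the degrees. Hence the Ext-quiver of $\Gr \Lambda^0$ is $\coprod_{\amsbb Z} Q$, all of whose components are finite, whereas the Ext-quiver of $\Gr \Lambda^\Pi$ is the covering quiver $\hat Q$ with vertices $Q_0 \times \amsbb Z$ and an arrow $(i,d) \to (j,d+1)$ for each $a\colon i \to j$. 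If the clock condition fails there is a cycle in $Q$ with unequal numbers of clockwise and counter-clockwise arrows, i.e.\ a closed walk of nonzero signed length $\ell$ based at some vertex $i_0$; then the copies $(i_0,k\ell)$ for $k \in \amsbb Z$ all lie in one component of $\hat Q$, which is therefore infinite. Since any equivalence respects the block decomposition and the cardinality of the simples in each block, a category possessing an infinite block cannot be equivalent to one all of whose blocks are finite, and so $\Gr \Lambda^\Pi$ and $\Gr \Lambda^0$ are not graded equivalent.

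The main obstacle I anticipate is the converse, and specifically making this invariant airtight: one must verify that finitely generated graded modules form a length category, so that the set of simples in each block is a genuine invariant under an arbitrary equivalence, and one must correctly identify the Ext-quiver of the length-graded category with the cover $\hat Q$ and read off its connectivity from the signed lengths of closed walks. The forward direction, by contrast, is transparent bookkeeping once the coboundary reformulation is in place.
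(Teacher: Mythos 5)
Your proof is correct, but it reaches the conclusion by a genuinely different route from the paper, which handles both implications in one stroke by citing graded Morita theory: a graded equivalence between $\Lambda^\Pi$ and $\Lambda^0$ exists if and only if one can choose shifts $s_i$ of the indecomposable projectives with $\Hom_{\Lambda^\Pi}(P_i(s_i),P_j(s_j))\cong\Hom_\Lambda(P_i,P_j)$, and since graded morphisms are homogeneous of degree zero this forces $s_j-s_i=1$ along each arrow, i.e.\ exactly your potential $\partial$. Your forward direction is the same idea made explicit: the re-grading by $-\partial(i)$ is precisely the equivalence that the shifted progenerator $\bigoplus P_i(s_i)$ induces, so here you have merely unpacked the citation. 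The real divergence is in the converse, where the paper again leans on the Morita-type classification (every graded equivalence arises from such shifts), whereas you avoid it entirely by exhibiting an intrinsic invariant: the partition of the simple objects into blocks via nonvanishing $\Ext^1$, which any equivalence of abelian categories preserves, together with the observation that a closed walk of nonzero signed length $\ell$ forces an infinite component in the covering quiver $\hat Q$ while every block of $\Gr\Lambda^0$ is finite. This buys self-containedness and robustness (no appeal to the classification of graded equivalences, only to the fact that equivalences preserve simples and Yoneda $\Ext^1$), at the cost of the verifications you already flag — that the relevant graded module category is a length category and that its $\Ext$-quiver is $\hat Q$ — all of which go through for a radical square zero algebra. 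The paper's argument is shorter but outsources the converse to the cited theorem.
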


\begin{proof}
   Let $P_1,\dots ,P_n$ be the indecomposable projective $\Lambda$-modules. For a graded module $M$ and an integer $s$, let $M(s)$ denote the $s$'th graded shift of $M$. From `graded Morita theory' (see e.g.\ \cite[Theorem 5.4]{MR659212}) we infer that $\Lambda^\Pi$ and $\Lambda^0$ are graded equivalent if and only if there are integers $s_i$ such that
   \[
   \Hom_{\Lambda^\Pi}(P_i(s_i), P_j(s_j)) \cong \Hom_{\Lambda}(P_i, P_j)
   \]
   for all $1 \leq i,j \leq n$. As a morphism of graded modules is required to be homogeneous of degree zero, it is clear that such $s_i$ are available if and only if each cycle of $Q$ has the same number of clockwise as counter-clockwise oriented arrows.
\end{proof}

In the sequel we will say that an element of $\Lambda$ of the form
\[
\e = \sum_{\alpha \in Q_1} k_{\alpha} \alpha
\]
is \textit{generic} if each $k_{\alpha} \in \mathbb k$ is non-zero. Since $\e \in \End_{\Lambda}(\Lambda)$ squares to zero we may consider the relatively projective differential $\Lambda$-module $(\Lambda, \e)$, which leads to yet another way of expressing the clock condition.

\begin{lemma} \label{lem:radicalsquarezero}
   Let $\Lambda$ be a radical square zero algebra and pick a generic element $\e \in \Lambda$. Then $(\Lambda, \e)$ is gradable if and only if $\Lambda$ satisfies the clock condition.
\end{lemma}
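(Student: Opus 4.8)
The plan is to make the gradability question completely explicit in this radical square zero setting, so that it reduces to a combinatorial condition on the quiver $Q$ which one can then match against the clock condition. Since $(\rad\Lambda)^2 = 0$, the algebra $\Lambda$ has a very rigid structure: as a vector space it decomposes as $\Lambda = \bigoplus_i \mathbb k e_i \oplus \bigoplus_{\alpha \in Q_1} \mathbb k \alpha$, with the idempotents $e_i$ in degree zero and the arrows $\alpha$ in degree one (under the grading $\Lambda^\Pi$), and any product of two arrows is zero. First I would use this to write down the underlying module and the differential of $(\Lambda, \e)$ concretely on the basis given by the idempotents and arrows, noting that $\e$ acts by sending $e_i \mapsto \sum_{\alpha : i \to j} k_\alpha \alpha$ and killing all arrows (since $\e \cdot \alpha$ lands in $(\rad\Lambda)^2 = 0$). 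This makes $\e_M^2 = 0$ automatic and shows the whole data is encoded by the scalars $k_\alpha$ together with the incidence structure of $Q$.

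Next I would unwind what gradability means here via the forgetful functor of Section \ref{subsec:periodiccomplexes}. An object $(\Lambda, \e)$ is gradable precisely when, after possibly adding a null-homotopic summand, it lies in the essential image of the forgetful functor $\Cb(\modules \Lambda) \to \C1(\modules \Lambda)$; by the discussion preceding Criterion \ref{crit:nongradable} (using that the image is closed under summands, \cite[Proposition 1.4]{Farnsteiner}) it suffices to work at the level of $\C1(\proj\Lambda)$. Concretely, gradability amounts to equipping the underlying projective $\Lambda = \bigoplus_i P_i$ with an internal $\amsbb Z$-grading, refining the decomposition into indecomposables $P_i$, such that $\e$ becomes homogeneous of degree $+1$. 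In other words I must assign to each indecomposable summand $P_i$ an integer $d_i$ so that every nonzero component of $\e$ raises degree by exactly one. Because $\e$ is generic, \emph{every} arrow $\alpha : i \to j$ of $Q$ contributes a genuinely nonzero component of the differential, and the homogeneity requirement becomes $d_j = d_i + 1$ for every arrow $\alpha : i \to j$. Here I would lean on the graded-equivalence reformulation from the Observation: assigning such degrees $d_i$ is exactly the data of the graded shifts realizing the graded equivalence $\Lambda^\Pi \cong \Lambda^0$, so the existence of a grading making $\e$ homogeneous is tightly linked to that equivalence.

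The heart of the argument is then the equivalence between the solvability of this system of integer constraints and the clock condition. One direction is immediate: a consistent choice of integers $d_i$ with $d_j = d_i + 1$ along every arrow assigns to each closed walk in $Q$ a total displacement of zero, which forces the number of forward (clockwise) arrows to equal the number of backward (counter-clockwise) arrows in each cycle — precisely the clock condition in the radical square zero formulation. Conversely, if the clock condition holds, I would build the $d_i$ by fixing a value on one vertex per connected component and propagating along the quiver; the clock condition is exactly the cocycle/consistency obstruction guaranteeing this propagation is well-defined around every cycle. I expect the main obstacle to be the careful bookkeeping in this last step: I must verify that genericity really does force \emph{all} arrows to impose the degree constraint (so that no relations can be exploited to grade an otherwise non-gradable module), and that working up to homotopy and up to null-homotopic summands does not enlarge the class of gradable differentials — that is, that a homotopy equivalence cannot secretly grade $(\Lambda,\e)$ when the underlying integer system is inconsistent. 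Controlling this faithfully, rather than merely at the level of the underlying additive structure, is where the real care is needed.
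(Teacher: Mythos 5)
Your proposal is correct and follows essentially the same route as the paper: unwind gradability of $(\Lambda,\e)$ as a degree assignment on the indecomposable projective summands of $\Lambda$, use genericity to force the constraint that every arrow raises degree by one, and identify the solvability of this integer system (zero total displacement around every cycle) with the clock condition. The paper states this more tersely via a decomposition $\Lambda=\bigoplus X_i$ with $\e$ mapping $X_i$ to $X_{i+1}$, but the content is the same, and your concern about homotopy is already disposed of by the discussion preceding Criterion \ref{crit:nongradable}.
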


\begin{proof}
   Assume that $(\Lambda, \e)$ is gradable. This means there is some decomposition
   \[
   \Lambda = \bigoplus_{i=1}^n X_i
   \]
   in $\modules \Lambda$ such that the component $X_i \xrightarrow{\iota} \Lambda \xrightarrow{\e} \Lambda \xrightarrow{\pi} X_j$ vanishes unless $j=i+1$. Denote, for each vertex $a$ of $Q$, by $P_a$ the corresponding indecomposable projective. Then for each arrow $a \to b$ in $Q$ it is evident that $P_a \mid X_i$ implies $P_b \mid X_{i+1}$, and moreover that this forces each cycle of $Q$ to have the same number of clockwise as counter-clockwise oriented arrows. Conversely, when $\Lambda$ satisfies the clock condition there is an obvious way of equipping $(\Lambda, \e)$ with a grading (which is moreover unique up to shifting each summand by the same integer when $\Lambda$ is connected).
\end{proof}

\subsection{Quadratic monomial algebras}

A monomial algebra $\Lambda$ is said to be \textit{quadratic} if the ideal $I$ is generated by paths of length $2$. In particular, the radical square zero algebras are of this form, and the idea of Lemma \ref{lem:radicalsquarezero} extends as follows.

\begin{proposition} \label{prop:clock}
   Let $\Lambda$ be a quadratic monomial algebra. If $\Lambda$ violates the clock condition, then it admits a non-gradable and relatively projective differential module.
\end{proposition}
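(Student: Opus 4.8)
The plan is to imitate Lemma \ref{lem:radicalsquarezero}, but instead of equipping all of $\Lambda$ with a generic differential (which now fails to square to zero) I would build a relatively projective differential module supported on a single offending cycle, letting the relations of $I$ dictate where the differential may be ``chained''. First I would localise the failure: since the clock condition is violated I can fix one cycle $C$ of $Q$ along which the number of clockwise relations differs from the number of counter-clockwise ones, and—after passing to a shortest such cycle—assume $C$ is primitive, so that the vertices it meets are pairwise distinct and hence carry pairwise non-isomorphic indecomposable projectives.

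For the construction I would traverse $C$ clockwise and classify each vertex as a source, a sink, or a pass-through (the last being where two consecutive arrows of $C$ compose), and among the pass-throughs distinguish the \emph{relation-junctions}, where the length-two composite lies in $I$, from the rest. Calling a vertex a \emph{node} if it is a source, a sink, or a relation-junction, I take
\[
   P = \bigoplus_{v} P_v
\]
over the nodes $v$, and equip it with the differential $\e$ whose components are the \emph{arcs} of $C$: an arc is a maximal subpath running between two consecutive nodes, and therefore passing only through non-relation pass-throughs. The associated component is left multiplication by that arc, with direction read off from its orientation. Thus across a relation-junction the two abutting arrows lie in separate components (they are \emph{chained}), whereas across a non-relation pass-through the two arrows are swallowed into one composite component (they are \emph{merged}).

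Two verifications follow. That $\e^2=0$: a nonzero composite of two components can occur only at a node through which one arc enters and another leaves, that is, at a relation-junction, where the resulting path contains the defining length-two relation and so vanishes; at a source or a sink both incident arcs point the same way, so no composite is formed; and each individual arc is itself a nonzero path, since for a quadratic monomial algebra membership in $I$ is detected by length-two subpaths, of which an arc has none. That $(P,\e)$ is non-gradable: a grading would assign a well-defined integer $d(v)$ to each (indecomposable, pairwise non-isomorphic) summand $P_v$, and every nonzero component would force a jump of one. Accumulating the jumps once around $C$—clockwise arcs contributing $-1$ and counter-clockwise arcs $+1$, with each run between a source and a sink carrying exactly one more arc than its interior relation-junctions—the total forced change works out to the difference between the counter-clockwise and clockwise relation counts, which is nonzero; hence no consistent grading exists. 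By Criterion \ref{crit:nongradable} this suffices.

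The step I expect to be the main obstacle is making this last point rigorous, because a grading need not respect the chosen decomposition of $P$: Krull--Schmidt only pins down the degree of each summand up to the ``twists'' coming from the nonzero morphisms among the $P_v$—for instance a merged arc is a genuine nonzero map between the projectives it joins, exactly the kind of morphism by which a summand could be twisted. I would control this by reading the obstruction off the images of the top generators at the sinks of $C$: at a sink the two outgoing arcs land in two summands that a grading is forced to place in the same degree, while the clockwise and counter-clockwise runs back to a source accumulate unequal numbers of unit jumps, and this splitting persists under any twist because the projectives at the sources emit no morphisms in the obstructing direction. The remaining bookkeeping—the alternation of clockwise and counter-clockwise runs, and the degenerate oriented case where one closes up a single run and reads off one component per relation—should be routine once this robustness against twists is secured.
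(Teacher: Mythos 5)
Your construction is, after unwinding, the same one the paper uses: your nodes and arcs are precisely the vertices and arrows of the contracted cycle $C'$ that the paper obtains by composing consecutive arrows whose product is not in $I$, and your count (each source-to-sink run carries one more arc than it has interior relation-junctions, and the runs alternate in orientation, so the signed sum of arcs around the cycle equals the nonzero difference of relation counts) is exactly the paper's observation that a chord with $n$ relations yields $n+1$ arrows in $C'$. The only organizational difference is that the paper transports the problem to the auxiliary radical square zero algebra $\Lambda'$ on the quiver $C'$, applies Lemma \ref{lem:radicalsquarezero} there, and declares the translation back to a $\Lambda$-module straightforward, whereas you build $(P,\e_P)$ directly over $\Lambda$; your verification that $\e_P^2=0$ and your reduction to a primitive cycle are both correct and are details the paper suppresses.

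Because you inline the translation step, you must confront the twisting issue inside the full endomorphism ring $\End_\Lambda(P)$, and here your proposed fix is the weak link: the assertion that ``the projectives at the sources emit no morphisms in the obstructing direction'' is unsubstantiated and need not hold, since $Q$ may contain arbitrary paths between the nodes outside of $C$. A clean repair is available. Having passed to a primitive cycle, the $P_v$ are pairwise non-isomorphic and occur in $P$ with multiplicity one, so by Krull--Schmidt any decomposition $P=\bigoplus_i X_i$ exhibiting a grading is carried to the standard decomposition by an invertible $\phi\in\End_\Lambda(P)$; gradability thus means that $\phi\e_P\phi^{-1}$ is homogeneous of degree one for some integer assignment $d$ on the nodes. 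Now expand the $(w,v)$-component of $\phi\e_P\phi^{-1}$ in the basis of $\Lambda$ given by the nonzero paths: the term coming from the diagonal entries of $\phi$ and $\phi^{-1}$ (which are units, hence have nonzero scalar part) contributes a nonzero multiple of the arc $p_{wv}$, while every other term is a linear combination of paths strictly containing some arc $p_{uu'}$ with $(u,u')\neq(w,v)$; since distinct arcs of a primitive cycle share no arrows, none of these paths can equal $p_{wv}$, so the coefficient of $p_{wv}$ survives and the component is nonzero. Hence every arc forces $d(w)=d(v)+1$, and summing the increments once around the cycle gives the contradiction you describe. With that substitution your argument is complete, and it has the minor virtue of spelling out the translation step that the paper leaves implicit.
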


\begin{proof}
   Let $C$ be a cycle of $Q$ in which the number of clockwise oriented relations differs from the number of counter-clockwise ones. Clearly, $C$ is determined by its `chords', i.e.\ intervals of maximal length in which the arrows are all either clockwise or counter-clockwise oriented. Let
   \[
      G = (i \xrightarrow{\alpha_i} i+1 \xrightarrow{\alpha_{i+1}} i+2 \to \cdots \to j-1 \xrightarrow{\alpha_{j-1}} j)
   \]
   be such a chord in $C$, and construct another linearly ordered interval $G'$ as follows. If $\alpha_{l+1}\alpha_l \in I$ for each $i \leq l \leq j-2$, then $G'=G$. If not, there is a minimal such $l$ with the property that $\alpha_{l+1}\alpha_l \notin I$, and we replace the sequence $l \xrightarrow{\alpha_l} l+1 \xrightarrow{\alpha_{l+1}} l+2$ in $G$ by an arrow $l \xrightarrow{\alpha_{l+1}\alpha_l} l+2$. After finitely many iterations, we have on our hands a linearly ordered $G'$ in which no arrow belongs to $I$, but where the composition of any pair of neighboring arrows does lie in $I$. The collection of these $G'$, one for each chord $G$ in $C$, form in an obvious way a new cycle $C'$.

   The evident merit of this construction is that the algebra $\Lambda'$ with ordinary quiver $C'$ and subject to the relations on the original cycle $C$, becomes a radical square zero algebra. Observe next that a chord $G$ in $C$ involving $n \geq 0$ relations gives rise to a chord $G'$ having $n+1$ arrows, and hence the number of clockwise arrows in $C'$ differs from the number of counter-clockwise ones. Lemma \ref{lem:radicalsquarezero} thus shows that $(\Lambda', \e')$ is a non-gradable differential $\Lambda'$-module for each generic $\e' \in \Lambda'$. It is straightforward to translate this data to a relatively projective differential $\Lambda$-module which moreover cannot be gradable.
\end{proof}

\begin{example}
   Let $\Lambda$ be the quadratic monomial algebra given by
   \begin{center}
      \begin{tikzpicture}
         \foreach \a in {1,2,...,14}{
         \draw (\a*360/14+154.3: 6em) node(\a){\a};
         }
         \path[->, font=\scriptsize] (1) edge node[left]{$\alpha_1$}(2)
         (2) edge node[below left]{$\alpha_2$} (3)
         (3) edge node[below, shift={(-.2,-.05)}]{$\alpha_3$}(4)
         (4) edge node[below]{$\alpha_4$}(5)
         (5) edge node[below, shift={(.2,-.05)}]{$\alpha_5$}(6)
         (6) edge node[below right]{$\alpha_6$}(7)

         (8) edge node[right]{$\delta_1$}(9)
         (9) edge node[right, shift={(.05,.05)}]{$\delta_2$}(10)

         (8) edge node[right]{$\gamma$}(7)

         (1) edge node[left]{$\beta_1$}(14)
         (14) edge node[above left]{$\beta_2$}(13)
         (13) edge node[above, shift={(-.2,.05)}]{$\beta_3$}(12)
         (12) edge node[above]{$\beta_4$}(11)
         (11) edge node[above, shift={(.2,.05)}]{$\beta_5$}(10);
         \path[densely dotted] (2) edge (4)
         (3) edge (5)
         (5) edge (7)
         (1) edge (13)
         (13) edge (11);
         \end{tikzpicture}
   \end{center}
   with the indicated relations. Then the quiver
   \begin{center}
      \begin{tikzpicture}
         \draw (180: 4em) node(1){1}
         (220: 4em) node(3){3}
         (260: 4em) node(4){4}
         (300: 4em) node(6){6}
         (340: 4em) node(7){7}
         (20: 4em) node(8){8}
         (60: 4em) node(10){10}
         (100: 4em) node(12){12}
         (140: 4em) node(14){14};
         \path[->, font=\scriptsize] (1) edge node[left]{$\alpha_2\alpha_1$}(3)
         (3) edge node[below, shift={(-.2,-.0)}]{$\alpha_3$}(4)
         (4) edge node[below, shift={(.1,-.05)}]{$\alpha_5\alpha_4$}(6)
         (6) edge node[below right]{$\alpha_6$}(7)

         (8) edge node[right]{$\gamma$}(7)

         (8) edge node[right, shift={(.05,.1)}]{$\delta_2\delta_1$}(10)

         (1) edge node[left]{$\beta_1$}(14)
         (14) edge node[above, shift={(-.3,.0)}]{$\beta_3\beta_2$}(12)
         (12) edge node[above, shift={(.1,.05)}]{$\beta_5\beta_4$}(10);
         \end{tikzpicture}
   \end{center}
   determines a radical square zero algebra $\Lambda'$, and $(\Lambda', \e')$ is non-gradable for each generic $\e' \in \Lambda'$. In terms of our original algebra, this means in particular that
   \[
   P=P_1 \oplus P_3 \oplus P_4 \oplus P_6 \oplus P_7 \oplus P_8 \oplus P_{10} \oplus P_{12} \oplus P_{14} \in \proj \Lambda
   \]
   together with
   \[
   \e_P = \alpha_2\alpha_1 + \alpha_3 + \alpha_5\alpha_4 + \alpha_6 + \beta_1 + \beta_3\beta_2 + \beta_5\beta_4 + \gamma + \delta_2\delta_1 \in \End_{\Lambda}(P)
   \]
   defines a relatively projective and non-gradable differential $\Lambda$-module $(P, \e_P)$.
\end{example}

\begin{remark}
   It seems plausible that a similar approach can be used to devise non-gradable differential modules over arbitrary monomial algebras violating the clock condition. For instance, to an algebra whose ordinary quiver contains the cycle
   \begin{center}
      \begin{tikzpicture}
         \foreach \a in {1,2,...,6}{
         \draw (\a*360/6+120: 2.5em) node(\a){\a};
         }
         \path[->] (1) edge (2)
         (2) edge (3)
         (3) edge (4)
         (1) edge (6)
         (6) edge (5)
         (5) edge (4);
         \path[in=225,out=315, looseness=1.7, densely dotted] (1) edge (4);
      \end{tikzpicture}
   \end{center}
   one can associate the radical square zero algebra whose ordinary quiver is
   \begin{center}
   \begin{tikzpicture}
      \matrix(a)[matrix of math nodes,
      row sep=2.5em, column sep=2em,
      text height=1.5ex, text depth=0.25ex]
      { 1 & 3 & 4 \\};
      \path[->](a-1-1) edge (a-1-2)
      (a-1-2) edge (a-1-3);
      \path[->, bend left](a-1-1) edge (a-1-3);
   \end{tikzpicture}
   \end{center}
   and refer again to Lemma \ref{lem:radicalsquarezero}. For the sake of brevity, and since the applications we have in mind are all quadratic, we will not dwell further on such a strategy.
\end{remark}

\section{Applications} \label{section:applications}

\subsection{Stably hereditary algebras} \label{subsec:stablyhereditary}

Recall that an algebra is \textit{stably hereditary} if indecomposable submodules of indecomposable projectives are projective or simple, and also indecomposable factor modules of indecomposable injectives are injective or simple (projective-injectives are only required to satisfy one of these conditions, however). The class of such algebras contains those that are stably equivalent to a hereditary one, but these are not all. One example of a stably hereditary algebra which is not stably equivalent to a hereditary one, originating from \cite{MR0404243}, is $\End_{\mathbb k \amsbb A_3}(\mathbb k \amsbb A_3 \oplus S)$ where $\amsbb A_3$ is linearly ordered and $S$ is the simple module which is neither projective nor injective. More generally, \cite[Proposition 3.7]{Xi2002193} provides a recipe for generating new stably hereditary algebras from previous ones.

\begin{theorem}\label{thm:stablyhereditary}
   Let $\Lambda$ be a stably hereditary algebra and let $n$ be a positive integer. The orbit category $\Db(\modules \Lambda)/\Sigma^n$ is triangulated if and only if $\Lambda$ is piecewise hereditary.
\end{theorem}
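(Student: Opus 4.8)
The plan is to prove the two implications separately. The ``if'' direction is the general theorem of Keller: whenever $\Lambda$ is piecewise hereditary, the orbit category $\Db(\modules \Lambda)/\Sigma^n$ coincides with its triangulated hull and hence is triangulated. By the homogeneity result (Theorem \ref{thm:homogeneity}) it is enough to verify this for a single $n$, so nothing stably-hereditary-specific is needed here. The substance of the theorem is the contrapositive of the ``only if'' direction: I must show that if a stably hereditary $\Lambda$ is \emph{not} piecewise hereditary, then $\Db(\modules \Lambda)/\Sigma^n$ fails to be triangulated.

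By Criterion \ref{crit:nongradable} together with Proposition \ref{prop:clock}, it suffices to establish two facts about a stably hereditary algebra that is not piecewise hereditary: first, that it is a quadratic monomial algebra, and second, that it violates the clock condition. For the first point I would invoke the characterization of piecewise hereditary algebras via finite strong global dimension from \cite{MR2413349}. A stably hereditary algebra which is not piecewise hereditary therefore has infinite strong global dimension, and I expect the definitions of stably hereditary (indecomposable submodules of indecomposable projectives being projective or simple, and dually) to force the radical relations to be rather rigid---in particular to pin down $\Lambda$ as a quadratic monomial algebra built from a quiver whose only obstruction to piecewise heredity is a badly oriented cycle. This structural reduction is where I would lean on the cited structure theory.

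For the clock condition itself, the key input should be the results of \cite{MR892057,MR2387592} advertised in the introduction as revealing that a non-piecewise-hereditary algebra of the relevant shape must contravene the clock condition. Concretely, I would argue that if every cycle of $Q$ satisfied the clock condition, then one could grade $\Lambda$ compatibly (as in the Observation and Lemma \ref{lem:radicalsquarezero}) and thereby realize $\Db(\modules \Lambda)$ as built from a hereditary category, contradicting infinite strong global dimension. Thus failure of piecewise heredity yields a cycle violating the clock condition.

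Assembling these: the cycle violating the clock condition feeds into Proposition \ref{prop:clock} to produce a non-gradable relatively projective differential module, which by Criterion \ref{crit:nongradable} shows $\Db(\modules \Lambda)/\Sigma^n$ is not triangulated for any $n$. The main obstacle I anticipate is the first structural step---verifying cleanly that a stably hereditary non-piecewise-hereditary algebra is quadratic monomial and isolating the offending cycle---since the definition of stably hereditary is phrased representation-theoretically rather than in terms of the relations, so translating it into the combinatorics of $Q$ and $I$ that Proposition \ref{prop:clock} consumes is the delicate part. The clock-condition violation, by contrast, should follow fairly directly once the relevant literature is cited.
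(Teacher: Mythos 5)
Your overall architecture is exactly the paper's: Keller's theorem for the ``if'' direction, and for the converse the chain \emph{quadratic monomial} $+$ \emph{clock condition violated} $\Rightarrow$ Proposition \ref{prop:clock} $\Rightarrow$ Criterion \ref{crit:nongradable}. However, the two ``facts'' you isolate are not actually established in your writeup, and the way you propose to get the first one is off target. That a stably hereditary algebra is quadratic monomial is an \emph{unconditional} structural fact taken from the literature (the paper cites \cite{MR532393} for it); it has nothing to do with the algebra failing to be piecewise hereditary, and in particular nothing to do with infinite strong global dimension via \cite{MR2413349}. Your plan to ``lean on the definitions to force the radical relations to be rigid'' is not an argument, and routing it through strong global dimension points you away from the actual source of the statement. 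You correctly flag this as the delicate step, but as proposed it would not close.

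The second step has a similar issue: your sketched direct argument --- that if every cycle satisfied the clock condition one could grade $\Lambda$ and realize $\Db(\modules\Lambda)$ as coming from a hereditary category --- is not a proof and is not how the paper proceeds. The paper's argument is: not piecewise hereditary implies not iterated tilted, and for stably hereditary algebras \cite[Theorem 3.2.5]{MR2387592} says that iterated tilted is equivalent to the clock condition, so the clock condition fails. (The companion reference \cite[Theorem A]{MR892057} plays the analogous role in the gentle one-cycle case, Theorem \ref{thm:gentleonecycle}, not here.) If you replace your two heuristic paragraphs by the citations \cite{MR532393} and \cite[Theorem 3.2.5]{MR2387592} respectively, your proof becomes the paper's proof; as written, both inputs are gaps. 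A minor additional remark: invoking Theorem \ref{thm:homogeneity} in the ``if'' direction is harmless but unnecessary, since Keller's result applies to every $n$ directly.
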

\begin{proof}
   By \cite[Theorem 1]{MR2184464}, the orbit category is triangulated when $\Lambda$ is piecewise hereditary. For the converse, note first that $\Lambda$ is quadratic monomial by \cite{MR532393}. If it is not piecewise hereditary, and hence not iterated tilted, then it violates the clock condition by \cite[Theorem 3.2.5]{MR2387592} and hence admits a non-gradable and relatively projective differential $\Lambda$-module (Proposition \ref{prop:clock}), which suffices (Criterion \ref{crit:nongradable}).
\end{proof}

\subsection{Gentle (one-cycle) algebras} \label{subsec:gentleonecycle}

We say that a quadratic monomial algebra $\Lambda$ is \textit{gentle} if the pair $(Q,I)$ adheres to the following conditions.
\begin{enumerate}
   \item At any vertex, there are at most two incoming and at most two outgoing arrows.
   \item For each arrow $\beta$, there is at most one arrow $\alpha$ such that $0 \neq \alpha \beta \in I$ and at most one arrow $\gamma$ such that $0 \neq \beta \gamma \in I$.
   \item For each arrow $\beta$, there is at most one arrow $\alpha$ such that $\alpha \beta \notin I$ and at most one arrow $\gamma$ such that $\beta \gamma \notin I$.
\end{enumerate}
The gentle algebras have received a great deal of interest, and are consequently fairly well understood. For instance, they are more conseptually characterized as those algebras whose repetitive algebra is special biserial (see \cite{MR892057,MR1102821}). If $(Q,I)$ satisfies the above $(1)$ through $(3)$ and, in addition, $Q$ contains a unique cycle, then we imaginatively say that $\Lambda$ is a \textit{gentle one-cycle algebra}.

\begin{theorem} \label{thm:gentleonecycle}
   Let $\Lambda$ be a gentle one-cycle algebra and let $n$ be a positive integer. The orbit category $\Db(\modules \Lambda)/\Sigma^n$ is triangulated if and only if $\Lambda$ is piecewise hereditary.
\end{theorem}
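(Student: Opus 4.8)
The plan is to mirror the proof of Theorem \ref{thm:stablyhereditary}, since a gentle algebra is quadratic monomial by definition and the logical skeleton is identical: one implication is Keller's theorem, the other passes through the clock condition. Concretely, when $\Lambda$ is piecewise hereditary the orbit category $\Db(\modules \Lambda)/\Sigma^n$ is triangulated by \cite[Theorem 1]{MR2184464}, which disposes of the `if' direction at once and for every $n$.

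For the converse I would argue by contraposition. Suppose $\Lambda$ is not piecewise hereditary. Because $\Lambda$ is gentle, hence quadratic monomial, the combination of Proposition \ref{prop:clock} and Criterion \ref{crit:nongradable} finishes the argument the instant we know that $\Lambda$ violates the clock condition: the proposition then manufactures a non-gradable, relatively projective differential module, and the criterion rules out triangulatedness for every positive integer $n$. So the entire content of the converse is reduced to the single implication ``not piecewise hereditary $\Rightarrow$ violates the clock condition''.

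I would extract this implication from the classification of gentle one-cycle algebras. By the theorem of Assem and Skowro\'nski \cite{MR892057}, a gentle one-cycle algebra (of finite global dimension, as is standing) is iterated tilted of type $\tilde{\amsbb A}_m$ exactly when the unique cycle of $Q$ carries as many clockwise oriented relations as counter-clockwise oriented ones, that is, exactly when the clock condition holds; for this class iterated tilted coincides with piecewise hereditary. Taking the contrapositive of the `only if' half delivers precisely what is needed, and one may equally well phrase the same dichotomy through finiteness of strong global dimension by appealing to \cite{MR2387592}.

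The step I expect to be the main obstacle is this last one: pinning down that for gentle one-cycle algebras the properties `piecewise hereditary' and `iterated tilted of type $\tilde{\amsbb A}_m$' genuinely agree, so that the external classification really forces the clock condition. The subtlety is that a priori a piecewise hereditary algebra need not be iterated tilted, and one must rule out the possibility that some gentle one-cycle algebra is piecewise hereditary of an exotic (tubular or weighted-projective-line) type while still failing the clock condition. Once this coincidence is secured, everything else is routine, since the passage from the associated radical square zero data back to an honest relatively projective differential $\Lambda$-module is already carried out inside Proposition \ref{prop:clock}.
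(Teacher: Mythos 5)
Your proposal follows the paper's argument step for step: Keller's theorem disposes of one implication, and the converse is reduced to ``not piecewise hereditary $\Rightarrow$ violates the clock condition'', after which Proposition \ref{prop:clock} and Criterion \ref{crit:nongradable} finish. The only thing to straighten out is the logic around the Assem--Skowro\'nski classification, and doing so dissolves the ``main obstacle'' you flag. The implication you actually need is \emph{clock condition $\Rightarrow$ iterated tilted of type $\tilde{\amsbb A}_m$} (the other half of the equivalence from \cite[Theorem A]{MR892057}), combined with the elementary inclusion \emph{iterated tilted $\Rightarrow$ piecewise hereditary}, which holds because an iterated tilted algebra is derived equivalent to a hereditary algebra. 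Contraposing this chain gives: not piecewise hereditary $\Rightarrow$ not iterated tilted $\Rightarrow$ the clock condition fails --- which is exactly how the paper argues. In particular you never need the converse inclusion ``piecewise hereditary $\Rightarrow$ iterated tilted'' for gentle one-cycle algebras, so there is no need to rule out exotic tubular or weighted-projective-line types; and the ``only if'' half of the classification that you propose to contrapose is the half the argument does not use. With that correction your proof is complete and coincides with the paper's.
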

\begin{proof}
   \cite[Theorem 1]{MR2184464} settles one implication. For what remains, assume that $\Lambda$ is not piecewise hereditary. In particular it is not iterated tilted, and thus violates the clock condition by \cite[Theorem A]{MR892057}. Hence $\Lambda$ admits a non-gradable and relatively projective differential module (Proposition \ref{prop:clock}), which suffices (Criterion \ref{crit:nongradable}).
\end{proof}

\bibliographystyle{amsplain}
\bibliography{main}
\end{document}